\theoremstyle{plain}
\newtheorem{thm}{Theorem}[section]
\newtheorem{lem}[thm]{Lemma}
\newtheorem{cor}[thm]{Corollary}
\theoremstyle{definition}
\newtheorem{rem}[thm]{Remark}
\newtheorem{dfns-rems}[thm]{Definitions and Remarks}
\newtheorem{notas-rems}[thm]{Notations and Remarks}
\newtheorem{exmps-rems}[thm]{Examples and Remarks}
\begin{document}

% ------------------------------------------------------------------------

\title[Regularity of powers of monomial ideals]{Some inequalities regarding the regularity of powers of monomial ideals}

% ------------------------------------------------------------------------

\author[S. A. Seyed Fakhari]{S. A. Seyed Fakhari}

\address{S. A. Seyed Fakhari, Departamento de Matem\'aticas\\Universidad de los Andes\\Bogot\'a\\Colombia.}

\email{s.seyedfakhari@uniandes.edu.co}

% ------------------------------------------------------------------------

\begin{abstract}
We prove some inequalities regarding the Castelnuovo--Mumford regularity of symbolic powers and integral closure of powers of monomial ideals.
\end{abstract}

% ------------------------------------------------------------------------

\subjclass[2020]{13D02, 05E40, 13B22}

% ------------------------------------------------------------------------

\keywords{Castelnuovo--Mumford regularity, Integral closure, Symbolic power}

% ------------------------------------------------------------------------

\thanks{}

% ------------------------------------------------------------------------

\maketitle

%%%%%%%%%%%%%%%%%%%%%%%%%%%%%%%%%%%%%%%%%%%%%%%%%%%%%%%%%%%%%%%%%%%%%%%%%%

\section{Introduction} \label{sec1}

Let $\mathbb{K}$ be a field and $S = \mathbb{K}[x_1,\ldots,x_n]$  be the
polynomial ring in $n$ variables over $\mathbb{K}$. Suppose that $M$ is a graded $S$-module with minimal free resolution
$$0  \longrightarrow \cdots \longrightarrow  \bigoplus_{j}S(-j)^{\beta_{1,j}(M)} \longrightarrow \bigoplus_{j}S(-j)^{\beta_{0,j}(M)} \longrightarrow  M \longrightarrow 0.$$
The Castelnuovo--Mumford regularity (or simply, regularity) of $M$, denoted by ${\rm reg}(M)$, is defined as
$${\rm reg}(M)=\max\{j-i|\ \beta_{i,j}(M)\neq0\},$$
and it is an important invariant in commutative algebra and algebraic geometry. By convention, we set ${\rm reg}(M)=-\infty$, if $M=0$ is the zero module. Regularity of powers of monomial ideals has been studied by several researchers (see for example \cite{b''}, \cite{bbh}, \cite{bh1}, \cite{c'}, \cite{cht}, \cite{hntt}, \cite{htt1}, \cite{js}, \cite{s14}, \cite{sy1}).

Recently, Minh and Vu \cite[Lemma 1.4]{mv} proved that for any squarefree monomial ideal $I$ and for each integer $m\geq 1$, we have$${\rm reg}(I)+m-1\leq \min\big\{{\rm reg}(I^m), {\rm reg}(I^{(m)})\big\},$$where $I^{(m)}$ denotes the $m$-th symbolic power of $I$. The goal of this paper is to extend the above inequality. We first prove in Theorem \ref{rrad} that for any monomial ideal $I$, we have$${\rm reg}(I)\geq {\rm reg}(\sqrt{I})+(\gamma(I)-1)h,$$where $h$ denotes the height of $I$ and $$\gamma(I):=\min\{{\rm deg}_{x_j}(u): 1\leq j\leq n, u\in G(I) \ {\rm and} \ x_j \ {\rm divides} \ u\}.$$

In Section \ref{sec3}, we focus on the regularity of symbolic powers of squarefree monomial ideals. As the main result of that section, we prove in Theorem \ref{sym} that if $I$ is a nonzero proper squarefree monomial ideal of $S$, then for any pair of positive integers $m$ and $k$ and for each integer $j$ with $m-k\leq j\leq m$, we have$${\rm reg}(I^{(km+j)})\geq {\rm reg}(I^{(m)})+(k-1)m+j.$$This in particular implies that$${\rm reg}(I^{(km)})\geq {\rm reg}(I^{(m)})+(k-1)m \ \ \ {\rm and} \ \ \ {\rm reg}(I^{(3)})\geq {\rm reg}(I^{(2)})+1$$ (see Corollary \ref{corsym}).

In Section \ref{sec4}, we study the regularity of integral closure of powers of monomial ideals. For a monomial ideal $I$, let $\overline{I}$ denote its integral closure and let $\mu(I)$ denote the cardinality of the set of minimal monomial generators of $I$. We prove in Theorem \ref{rnormal1} that if $\ell$ is the analytic spread of $I$, then for $s=\mu(\overline{I^{\ell-1}})!$ and for every integer $m\geq 1$, we have$${\rm reg}(\overline{I})\leq {\rm reg}(I^{sm})-\gamma(I)(sm-1).$$

As mentioned above, Minh and Vu \cite{mv} prove that for any monomial ideal $I$ and for each integer $m\geq 1$, the inequality ${\rm reg}(I^m)\geq {\rm reg}(I)+m-1$ holds. In Theorem \ref{rintc}, we generalize this result by showing that for any integrally closed monomial ideal $I$, the inequality$${\rm reg}(I^m)\geq {\rm reg}(I)+\gamma(I)(m-1)$$holds. Finally, in Theorem \ref{rint}, we investigate the relation between regularities of integral closure of certain powers of a monomial ideal. We prove that for a monomial ideal $I$ and for every pair of integers $s,m\geq 1$,$${\rm reg}(\overline{I^{sm}})\geq {\rm reg}(\overline{I^s})+\gamma(I)s(m-1).$$
%%%%%%%%%%%%%%%%%%%%%%%%%%%%%%%%%%%%%%%%%%%%%%%%%%%%%%%%%%%%%%%%%%%%%%%%%%

\section{Preliminaries} \label{sec2}

In this section, we provide the definitions and basic facts which will be used in the next sections.

Let $I$ be a monomial ideal of $S$. The (unique) set of minimal monomial generators of $I$ will be denoted by $G(I)$. Moreover, we set $\mu(I)=|G(I)|$. For an integer $m\geq 1$, the $m$-th {\it symbolic power} of $I$ is defined as$$I^{(m)}=\bigcap_{\frak{p}\in {\rm Min}(I)} {\rm Ker}(S\rightarrow (S/I^m)_{\frak{p}}),$$where ${\rm Min}(I)$ denotes the set of minimal prime ideals of $I$. An element $f \in S$ is
{\it integral} over $I$, if there exists an equation
$$f^k + c_1f^{k-1}+ \ldots + c_{k-1}f + c_k = 0 {\rm \ \ \ \ with} \ c_i\in I^i.$$
The set of elements $\overline{I}$ in $S$ which are integral over $I$ is the {\it integral closure}
of $I$. It is known that the integral closure of a monomial ideal $I\subset S$ is a monomial ideal
generated by all monomials $u \in S$ for which there exists an integer $k$ such that
$u^k\in I^k$ (see \cite[Theorem 1.4.2]{hh'}). The ideal $I$ is {\it integrally closed}, if $I = \overline{I}$. The Rees algebra of $I$ is the ring  $\mathcal{R}(I)=S[It]=\bigoplus_{k=0}^{\infty}I^kt^k$. Let $\frak{m}=(x_1,\ldots,x_n)$ be the maximal ideal of $S$. The Krull dimension of $\mathcal{R}(I)/
{{\frak{m}}\mathcal{R}(I)}$ is called the {\it analytic spread} of $I$ and will be denoted by $\ell(I)$.

A {\it simplicial complex} $\Delta$ on the set of vertices $[n]:=\{1,
\ldots,n\}$ is a collection of subsets of $[n]$ which is closed under
taking subsets; that is, if $F \in \Delta$ and $F'\subseteq F$, then also
$F'\in\Delta$. Every element $F\in\Delta$ is called a {\it face} of
$\Delta$. A {\it facet} of $\Delta$ is a maximal face
of $\Delta$ with respect to inclusion. The {\it link}
of $\Delta$ with respect to a face $F \in \Delta$, denoted by ${\rm lk_
{\Delta}}(F)$, is the simplicial complex ${\rm lk_{\Delta}}(F)=\{G
\subseteq [n]\setminus F\mid G\cup F\in \Delta\}$. By $\widetilde{H}_i(\Delta; \mathbb{K})$, we mean the $i$-th reduced homology of $\Delta$ with coefficients in $\mathbb{K}$.

Let $I$ be a squarefree monomial ideal of $S$. for a
subset $F\subseteq [n]$, we set $\mathbf{x}_F:=\prod_{i\in F}x_i$. The {\it
Stanley--Reisner} simplicial complex associated to $I$ is the simplicial complex $\Delta(I)=\{F\subseteq [n] : {\rm\bf x}_F\notin I\}$.

The following result from \cite{mnptv} has a key role in this paper. Here, $\mathbb{N}$ denotes the set of nonnegative integers. For a vector ${\rm\bf a}=(a_1, \cdots, a_n)\in \mathbb{N}^n$, we set ${\rm\bf x^a}:=x_1^{a_1}\cdots x_n^{a_n}$. Moreover, the {\it support} of ${\rm\bf a}$ is the set ${\rm supp}({\rm\bf a}):=\{i\in [n] : a_i\neq 0\}$. Furthermore, we set $|{\rm\bf a}|:=a_1+\cdots +a_n$.

\begin{lem} [\cite{mnptv}, Lemmata 2.8 and 2.13] \label{mv}
Let $I$ be a monomial ideal in $S$. Then
\begin{align*}
{\rm reg}(I)=\max\big\{|{\rm\bf a}|+i+1 :& \ {\rm\bf a}\in \mathbb{N}^n, i\geq 0, \tilde{H}_{i-1}({\rm lk}_{\Delta_{\rm\bf a}(I)}F ; \mathbb{K})\neq 0,\\ & \ {\rm for \ some} \ F\in \Delta_{\rm\bf a}(I) \ {\rm with} \ F \cap {\rm supp}({\rm\bf a})=\emptyset\big\},
\end{align*}
where $\Delta_{\rm\bf a}(I):=\Delta(\sqrt{I: {\rm\bf x^a}})$.
\end{lem}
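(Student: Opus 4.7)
The plan is to combine the local-cohomology description of regularity with Takayama's formula for the multigraded components of $H^i_{\mathfrak{m}}(S/I)$. One first reduces via $\operatorname{reg}(I) = \operatorname{reg}(S/I) + 1$ (valid for any nonzero proper monomial ideal $I$) and the Grothendieck--Serre / Eisenbud--Goto description
\[
\operatorname{reg}(S/I) \;=\; \max\bigl\{i+j : H^i_{\mathfrak{m}}(S/I)_j \ne 0\bigr\},
\]
so that the task becomes determining which multigraded degrees ${\bf b} \in \mathbb{Z}^n$ support each local cohomology module.

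Takayama's formula provides exactly this information. Given ${\bf b} \in \mathbb{Z}^n$, decompose it as ${\bf b} = {\bf a} - \sum_{j \in F} {\bf e}_j$, where ${\bf a} \in \mathbb{N}^n$ records the nonnegative coordinates of ${\bf b}$ and $F \subseteq [n]$ is disjoint from $\operatorname{supp}({\bf a})$. The formula asserts that $H^i_{\mathfrak{m}}(S/I)_{\bf b}$ vanishes unless every negative entry of ${\bf b}$ equals $-1$ and $F$ lies in $\Delta_{\bf a}(I)=\Delta(\sqrt{I:{\bf x^a}})$, in which case
\[
H^i_{\mathfrak{m}}(S/I)_{\bf b} \;\cong\; \widetilde{H}_{i - |F| - 1}\bigl(\operatorname{lk}_{\Delta_{\bf a}(I)} F;\; \mathbb{K}\bigr).
\]
Thus the multigraded Hilbert function of local cohomology is directly governed by reduced homology of links of the degree complexes appearing in the lemma.

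Substituting back, a nonzero class at bidegree $(i, {\bf b})$ contributes $i + |{\bf b}| = i + |{\bf a}| - |F|$ with nonvanishing homology in dimension $i - |F| - 1$. Setting $i' := i - |F|$ rewrites the contribution as $|{\bf a}| + i'$, with homology in dimension $i' - 1$; adding $1$ to pass from $\operatorname{reg}(S/I)$ to $\operatorname{reg}(I)$ recovers exactly the expression $|{\bf a}| + i + 1$ from the statement, after renaming $i'$ to $i$. The main technical obstacle is the precise bookkeeping in Takayama's formula, particularly verifying that the multidegrees witnessing the regularity have negative part concentrated in degree $-1$ and that the associated subcomplex is truly $\operatorname{lk}_{\Delta_{\bf a}(I)} F$ rather than some other induced subcomplex, so that taking the maximum over $({\bf a}, F, i)$ neither overcounts nor loses contributions.
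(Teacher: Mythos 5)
Your proposal is correct and follows the same route as the source the paper cites for this lemma (the paper gives no proof of its own, only the reference to \cite{mnptv}): pass to the local cohomology characterization of ${\rm reg}(S/I)$ and apply Takayama's formula in its refined (Minh--Trung) form, where the degree complex of ${\rm\bf b}$ is the link of the negative support inside $\Delta_{{\rm\bf b}^+}(I)$. One small inaccuracy: Takayama's formula does not assert that $H^i_{\frak{m}}(S/I)_{\rm\bf b}$ vanishes when some entry is $\leq -2$ (e.g.\ $H^1_{\frak{m}}(\mathbb{K}[x_1,x_2]/(x_1))$ is nonzero in every degree $(0,b_2)$ with $b_2\leq -1$); rather, the link homology on the right-hand side is independent of how negative those entries are, so such degrees only decrease $i+|{\rm\bf b}|$ and can be discarded when taking the maximum---which is precisely the bookkeeping point you flag at the end, so the argument goes through.
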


For a monomial $u\in S$ and for any integer $j$ with $1\leq j\leq n$, the degree of $u$ with respect to $x_j$ is denoted by ${\rm deg}_{x_j}(u)$. We recall that for a monomial ideal $I$, the quantity $\gamma(I)$ is defined as$$\gamma(I):=\min\{{\rm deg}_{x_j}(u): 1\leq j\leq n, u\in G(I) \ {\rm and} \ x_j \ {\rm divides} \ u\}.$$

The following theorem is our first application of Lemma \ref{mv}. 

\begin{thm} \label{rrad}
Let $I$ be a proper nonzero monomial ideal of $S$. Then$${\rm reg}(I)\geq {\rm reg}(\sqrt{I})+(\gamma(I)-1)h,$$where $h$ denotes the height of $I$.
\end{thm}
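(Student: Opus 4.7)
The plan is to apply Lemma~\ref{mv} twice: once to $\sqrt{I}$ to obtain a triple $(\mathbf{b},i,F)$ witnessing $\mathrm{reg}(\sqrt{I})$, and then to $I$ itself with an enlarged vector $\mathbf{a}$ that preserves the relevant complex while gaining exactly $(\gamma(I)-1)h$ in the size term.

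The first step is to observe that, for the squarefree ideal $\sqrt{I}$, the witness in Lemma~\ref{mv} can be taken with $\mathbf{b}=\mathbf{0}$. Indeed, for any squarefree monomial ideal $J$ the colon $J:\mathbf{x}^{\mathbf{b}}$ depends only on $\mathrm{supp}(\mathbf{b})$, so replacing $\mathbf{b}$ by $k\mathbf{b}$ leaves $\Delta_{\mathbf{b}}(J)$, the admissibility of the face $F$, and the reduced link homology unchanged, while sending $|\mathbf{b}|+i+1\to\infty$; since $\mathrm{reg}(\sqrt{I})<\infty$, any valid triple must have $\mathbf{b}=\mathbf{0}$. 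Hence there exist $i\geq 0$ and $F\in\Delta(\sqrt{I})$ with $\tilde{H}_{i-1}(\mathrm{lk}_{\Delta(\sqrt{I})}F;\mathbb{K})\neq 0$ and $\mathrm{reg}(\sqrt{I})=i+1$. Being a face of $\Delta(\sqrt{I})$, $F$ lies in some facet, whose complement is a minimal prime $P$ of $I$; in particular $P\cap F=\emptyset$ and $|P|\geq h$.

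I then set $\mathbf{a}:=(\gamma(I)-1)\sum_{j\in P}\mathbf{e}_j$, which satisfies $|\mathbf{a}|=(\gamma(I)-1)|P|\geq(\gamma(I)-1)h$ and $F\cap\mathrm{supp}(\mathbf{a})=\emptyset$. The heart of the argument is the identity $\sqrt{I:\mathbf{x}^{\mathbf{a}}}=\sqrt{I}$. For each $u=\prod_j x_j^{c_j}\in G(I)$, the generator of $\sqrt{I:\mathbf{x}^{\mathbf{a}}}$ coming from $u$ is $\prod_{c_j>a_j}x_j$; for $j\notin P$ one has $a_j=0$, so $c_j>a_j$ iff $c_j>0$, while for $j\in P$ the defining property of $\gamma(I)$ gives $c_j>0\Rightarrow c_j\geq\gamma(I)>a_j$, so again $c_j>a_j$ iff $c_j>0$. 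This generator is thus $\sqrt{u}$, and $\sqrt{I:\mathbf{x}^{\mathbf{a}}}=\sqrt{I}$, giving $\Delta_{\mathbf{a}}(I)=\Delta(\sqrt{I})$.

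With this identity the triple $(\mathbf{a},i,F)$ is a valid witness for $I$ in Lemma~\ref{mv}, yielding $\mathrm{reg}(I)\geq|\mathbf{a}|+i+1\geq(\gamma(I)-1)h+\mathrm{reg}(\sqrt{I})$. The conceptual point is that $\gamma(I)-1$ is exactly the largest shift one can add to the $P$-coordinates without changing the radical of the colon, and choosing $P$ to be a minimal prime of smallest height $h$ extracts the maximum possible gain. The main technical obstacle lies in the initial reduction to $\mathbf{b}=\mathbf{0}$; once that is in place, the rest is a clean exponent bookkeeping.
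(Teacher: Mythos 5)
Your proof is correct and follows essentially the same route as the paper: both arguments hinge on the identity $\sqrt{I:\mathbf{x}^{\mathbf{a}}}=\sqrt{I}$ for a vector $\mathbf{a}$ with entries at most $\gamma(I)-1$ supported on at least $h$ vertices outside $F$, followed by an application of Lemma~\ref{mv}. The only cosmetic differences are that the paper obtains the witness $(i,F)$ for $\mathrm{reg}(\sqrt{I})$ directly from Hochster's formula rather than by your (valid) scaling argument within Lemma~\ref{mv}, and it supports $\mathbf{a}$ on $h$ arbitrary elements of $[n]\setminus F$ rather than on a whole minimal prime.
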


\begin{proof}
Since $\sqrt{I}$ is a squarefree monomial ideal, it follows from Hochster's formula \cite[Theorem A.7.3]{hh'} that there exists an integer $i\geq 0$ such that ${\rm reg}(\sqrt{I})=i+1$ and $\tilde{H}_{i-1}({\rm lk}_{\Delta(\sqrt{I})}F; \mathbb{K})\neq 0$, for some face $F\in \Delta(\sqrt{I})$. Note that $F$ is contained in a facet of $\Delta(\sqrt{I})$. Since $I$ and $\sqrt{I}$ have the same height, it follows from \cite[Lemma 1.5.4]{hh'} that the cardinality of $[n]\setminus F$ is at least $h$. So, there are distinct elements $r_1, \ldots, r_h\in [n]\setminus F$. Let ${\rm\bf b}\in \mathbb{N}^n$ be the vector with ${\rm\bf x^b}=x_{r_1}^{\gamma(I)-1}\cdots x_{r_h}^{\gamma(I)-1}$. Then the definition of $\gamma(I)$ implies that $\sqrt{I:{\rm\bf x^b}}=\sqrt{I}$. This means that $\Delta_{\rm\bf b}(I)=\Delta(\sqrt{I})$. Therefore,$$\tilde{H}_{i-1}({\rm lk}_{\Delta_{\rm\bf b}(I)}F; \mathbb{K})=\tilde{H}_{i-1}({\rm lk}_{\Delta(\sqrt{I})}F; \mathbb{K})\neq 0.$$Hence, we conclude from Lemma \ref{mv} that
\begin{align*}
&{\rm reg}(I)\geq |{\rm\bf b}|+i+1=(\gamma(I)-1)h+i+1=(\gamma(I)-1)h+{\rm reg}(\sqrt{I}).
\end{align*}
\end{proof}

%%%%%%%%%%%%%%%%%%%%%%%%%%%%%%%%%%%%%%%%%%%%%%%%%%%%%%%%%%%%%%%%%%%%%%%%%%

\section{Symbolic powers} \label{sec3}

In this section, we study the relation between regularities of certain symbolic powers of a squarefree monomial ideal. The following theorem is the main result of this section.

\begin{thm} \label{sym}
Let $I$ be a nonzero proper squarefree monomial ideal of $S$. Suppose that $m$ and $k$ are positive integers. Then for every integer $j$ with $m-k\leq j\leq m$, we have$${\rm reg}(I^{(km+j)})\geq {\rm reg}(I^{(m)})+(k-1)m+j.$$
\end{thm}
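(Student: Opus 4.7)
My plan is to apply Lemma \ref{mv} twice: first to $I^{(m)}$ in order to extract an extremal triple $({\rm\bf a},F,i)$, and then to $I^{(km+j)}$ with a lifted triple $({\rm\bf b},F,i)$ of total weight $|{\rm\bf b}|=|{\rm\bf a}|+(k-1)m+j$. The first step yields ${\rm\bf a}\in\mathbb{N}^n$, an integer $i\ge 0$ and a face $F\in\Delta_{\rm\bf a}(I^{(m)})$ with $F\cap{\rm supp}({\rm\bf a})=\emptyset$ and $\tilde{H}_{i-1}({\rm lk}_{\Delta_{\rm\bf a}(I^{(m)})}F;\mathbb{K})\ne 0$, satisfying ${\rm reg}(I^{(m)})=|{\rm\bf a}|+i+1$. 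Since $I$ is squarefree, $I^{(m)}=\bigcap_{\mathfrak{p}\in{\rm Min}(I)}\mathfrak{p}^m$, and $\sqrt{\mathfrak{p}^m:{\rm\bf x^a}}$ equals $\mathfrak{p}$ or $S$ according as ${\rm deg}_{\mathfrak{p}}({\rm\bf x^a})<m$ or not. Setting $\mathcal{P}:=\{\mathfrak{p}\in{\rm Min}(I):{\rm deg}_{\mathfrak{p}}({\rm\bf x^a})<m\}$, this identifies $\sqrt{I^{(m)}:{\rm\bf x^a}}=\bigcap_{\mathfrak{p}\in\mathcal{P}}\mathfrak{p}$, and hence $\Delta_{\rm\bf a}(I^{(m)})=\Delta\bigl(\bigcap_{\mathfrak{p}\in\mathcal{P}}\mathfrak{p}\bigr)$.

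The central task is to produce ${\rm\bf b}\in\mathbb{N}^n$ satisfying three conditions: first, ${\rm supp}({\rm\bf b})\cap F=\emptyset$; second, $|{\rm\bf b}|=|{\rm\bf a}|+(k-1)m+j$; and third, the set $\{\mathfrak{p}\in{\rm Min}(I):{\rm deg}_{\mathfrak{p}}({\rm\bf x^b})<km+j\}$ equals $\mathcal{P}$. The third forces $\sqrt{I^{(km+j)}:{\rm\bf x^b}}=\sqrt{I^{(m)}:{\rm\bf x^a}}$, so $\Delta_{\rm\bf b}(I^{(km+j)})=\Delta_{\rm\bf a}(I^{(m)})$, and the nonzero reduced homology of the link at $F$ is automatically inherited. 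Lemma \ref{mv}, applied to $I^{(km+j)}$ with the triple $({\rm\bf b},F,i)$, then yields
$${\rm reg}(I^{(km+j)})\ge|{\rm\bf b}|+i+1={\rm reg}(I^{(m)})+(k-1)m+j,$$
which is the desired inequality.

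I would seek ${\rm\bf b}={\rm\bf a}+{\rm\bf c}$ with ${\rm\bf c}\in\mathbb{N}^n$ supported in $V:=[n]\setminus F$ and $|{\rm\bf c}|=(k-1)m+j$. Writing $d_{\mathfrak{p}}:={\rm deg}_{\mathfrak{p}}({\rm\bf x^a})$, the upper-bound half of condition three requires ${\rm deg}_{\mathfrak{p}}({\rm\bf x^c})\le km+j-1-d_{\mathfrak{p}}$ for each $\mathfrak{p}\in\mathcal{P}$, which is automatic because $d_{\mathfrak{p}}\le m-1$ gives $km+j-1-d_{\mathfrak{p}}\ge(k-1)m+j=|{\rm\bf c}|\ge{\rm deg}_{\mathfrak{p}}({\rm\bf x^c})$. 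The substantive requirements are the lower bounds ${\rm deg}_{\mathfrak{p}}({\rm\bf x^c})\ge km+j-d_{\mathfrak{p}}$ for every borderline prime $\mathfrak{p}\notin\mathcal{P}$ with $d_{\mathfrak{p}}<km+j$.

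The hard part will be choosing ${\rm\bf c}$ so as to satisfy all such lower bounds simultaneously using a total mass of only $(k-1)m+j$. The range hypothesis $m-k\le j\le m$ is precisely what makes this feasible: the extreme case $d_{\mathfrak{p}}=m$ demands an increment of $(k-1)m+j$, exactly matching $|{\rm\bf c}|$, and larger $d_{\mathfrak{p}}$ demands less. Moreover, since $d_{\mathfrak{p}}\ge m>0$ for each borderline $\mathfrak{p}\notin\mathcal{P}$, the sum $d_{\mathfrak{p}}=\sum_{\ell\in{\rm supp}(\mathfrak{p})\cap V}a_{\ell}$ is positive, so ${\rm supp}(\mathfrak{p})\cap V$ is nonempty and carries an index where ${\rm\bf a}$ is already nonzero. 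The remaining combinatorial step is to distribute ${\rm\bf c}$ across vertices of $V$ lying in the shared supports of the borderline primes, which I expect to handle via a weighted covering argument anchored by the matching between $|{\rm\bf c}|$ and the worst individual deficit.
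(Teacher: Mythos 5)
Your setup (extracting the extremal triple $({\rm\bf a},F,i)$ for $I^{(m)}$, identifying $\sqrt{I^{(m)}:{\rm\bf x^a}}=\bigcap_{\mathfrak{p}\in\mathcal{P}}\mathfrak{p}$, and transporting the nonvanishing link homology to $I^{(km+j)}$ via a vector $\rm\bf b$ with $\Delta_{\rm\bf b}(I^{(km+j)})=\Delta_{\rm\bf a}(I^{(m)})$) is sound and matches the paper's strategy. But the step you defer --- the ``weighted covering argument'' constructing ${\rm\bf c}$ --- is exactly the crux, and as you have set it up it cannot be carried out in general. You require $|{\rm\bf c}|=(k-1)m+j$ while every borderline prime $\mathfrak{p}\notin\mathcal{P}$ with $d_{\mathfrak{p}}=m$ demands ${\rm deg}_{\mathfrak{p}}({\rm\bf x^c})\geq (k-1)m+j$, i.e.\ a single such prime already consumes the entire budget, and all of its mass must sit inside ${\rm supp}(\mathfrak{p})$. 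If there are two borderline primes with $d_{\mathfrak{p}_1}=d_{\mathfrak{p}_2}=m$ whose supports are disjoint (e.g.\ $\mathfrak{p}_1=(x_1,x_2)$, $\mathfrak{p}_2=(x_3,x_4)$ with ${\rm\bf a}=(m,0,m,0)$), the lower bounds force $|{\rm\bf c}|\geq 2\big((k-1)m+j\big)$, which exceeds your budget whenever $(k-1)m+j>0$. So no admissible ${\rm\bf c}$ exists in that configuration, and nothing in your argument excludes it.

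The underlying self-imposed constraint is unnecessary: Lemma \ref{mv} is a maximum over all triples, so you only need $|{\rm\bf b}|\geq |{\rm\bf a}|+(k-1)m+j$, not equality. The paper exploits this: when $|{\rm\bf a}|\geq m$ it takes the \emph{multiplicative} rescaling ${\rm\bf c}=(k+1){\rm\bf a}$, so every $d_{\mathfrak{p}}$ is multiplied by $k+1$ uniformly. Then $(k+1)d_{\mathfrak{p}}\geq km+j$ holds if and only if $d_{\mathfrak{p}}\geq m$ (the hypothesis $j\geq m-k$ gives $km+j\geq (k+1)m-k$, and $j\leq m$ gives the converse), so the set $\mathcal{P}$ is preserved with no covering problem at all; and $(k+1)|{\rm\bf a}|\geq |{\rm\bf a}|+km\geq |{\rm\bf a}|+(k-1)m+j$ since $|{\rm\bf a}|\geq m\geq j$. (In the paper this is packaged as the identity $\sqrt{I^{(m)}:{\rm\bf x^a}}=\sqrt{I^{(km+j)}:{\rm\bf x}^{(k+1){\rm\bf a}}}$ via \cite[Lemma 2.2]{s12}.) Your additive construction does work when $|{\rm\bf a}|\leq m-1$, since then $\mathcal{P}={\rm Min}(I)$ and there are no lower-bound constraints --- that is precisely the paper's Case 1. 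To complete your proof you should replace the additive augmentation by this rescaling in the case $|{\rm\bf a}|\geq m$.
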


\begin{proof}
By Lemma \ref{mv}, there exists a vector ${\rm\bf a}\in \mathbb{N}^n$ and an integer $i\geq 0$ such that ${\rm reg}(I^{(m)})=|{\rm\bf a}|+i+1$ and $\tilde{H}_{i-1}({\rm lk}_{\Delta_{\rm\bf a}(I^{(m)})}F; \mathbb{K})\neq 0$, for some face $F\in \Delta_{\rm\bf a}(I^{(m)})$ with $F\cap {\rm supp}({\rm\bf a})=\emptyset$. We divide the proof into the following two cases.

\vspace{0.3cm}
{\bf Case 1.} Assume that $|{\rm\bf a}|\leq m-1$. Then we conclude from \cite[Lemma 2.11]{mv} that $\Delta_{\rm\bf a}(I^{(m)})=\Delta(I)$. If $F=[n]$, it follows from $F\in \Delta_{\rm\bf a}(I^{(m)})=\Delta(I)$ that $I=0$ which is a contradiction. Hence, $F\neq [n]$. So, there is an element $r\in [n]\setminus F$. Let ${\rm\bf b}\in \mathbb{N}^n$ be the vector with ${\rm\bf x^b}=x_r^{(k-1)m+j}{\rm\bf x^a}$. Then$$|{\rm\bf b}|=(k-1)m+j+|{\rm\bf a}|\leq (k-1)m+j+m-1=km+j-1.$$Thus, \cite[Lemma 2.11]{mv} yields that $\Delta_{\rm\bf b}(I^{(km+j)})=\Delta(I)$. Therefore,$$\tilde{H}_{i-1}({\rm lk}_{\Delta_{\rm\bf b}(I^{(km+j)})}F; \mathbb{K})=\tilde{H}_{i-1}({\rm lk}_{\Delta(I)}F; \mathbb{K})=\tilde{H}_{i-1}({\rm lk}_{\Delta_{\rm\bf a}(I^{(m)})}F; \mathbb{K})\neq 0.$$Hence, Lemma \ref{mv} implies that
\begin{align*}
&{\rm reg}(I^{(km+j)})\geq |{\rm\bf b}|+i+1=(k-1)m+j+|{\rm\bf a}|+i+1\\ &=(k-1)m+j+{\rm reg}(I^{(m)}).
\end{align*}

\vspace{0.3cm}
{\bf Case 2.} Assume that $|{\rm\bf a}|\geq m$. We claim that$$\sqrt{I^{(m)}:{\rm\bf x^a}}=\sqrt{I^{(km+j)}:{\rm\bf x}^{(k+1){\rm\bf a}}}.$$

Note that for a monomial $u\in S$, we have $u\in \sqrt{I^{(m)}:{\rm\bf x^a}}$ if and only if there is a positive integer $t$ such that $u^t{\rm\bf x^a}\in I^{(m)}$. Using \cite[Lemma 2.2]{s12}, this is equivalent to say that $u^{(k+1)t}{\rm\bf x}^{(k+1){\rm\bf a}}\in I^{(km+j)}$ which means that $u\in \sqrt{I^{(km+j)}:{\rm\bf x}^{(k+1){\rm\bf a}}}$. This proves the claim.

Set ${\rm\bf c}:=(k+1){\rm\bf a}$. It follows from the claim that $\Delta_{\rm\bf a}(I^{(m)})=\Delta_{\rm\bf c}(I^{(km+j)})$. Consequently,$$\tilde{H}_{i-1}({\rm lk}_{\Delta_{\rm\bf c}(I^{(km+j)})}F; \mathbb{K})=\tilde{H}_{i-1}({\rm lk}_{\Delta_{\rm\bf a}(I^{(m)})}F; \mathbb{K})\neq 0.$$Therefore, we deduce from Lemma \ref{mv} that
\begin{align*}
&{\rm reg}(I^{(km+j)})\geq |{\rm\bf c}|+i+1=(k+1)|{\rm\bf a}|+i+1\geq km+|{\rm\bf a}|+i+1\\ &\geq(k-1)m+j+{\rm reg}(I^{(m)}),
\end{align*}
where the second inequality follows from $|{\rm\bf a}|\geq m$ and the third one is a consequence of $j\leq m$.
\end{proof}

As an immediate consequence of Theorem \ref{sym}, we obtain the following corollary.

\begin{cor} \label{corsym}
Let $I$ be a nonzero proper squarefree monomial ideal of $S$.
\begin{itemize}
\item[(i)] For every pair of positive integers $k,m$, we have$${\rm reg}(I^{(km)})\geq {\rm reg}(I^{(m)})+(k-1)m.$$
\item[(ii)] ${\rm reg}(I^{(3)})\geq {\rm reg}(I^{(2)})+1$.
\end{itemize}
\end{cor}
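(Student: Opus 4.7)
The plan is to view both parts as direct parameter substitutions into Theorem \ref{sym}, which is the only work the word ``immediate'' in the statement is doing. The theorem provides a family of lower bounds indexed by a triple $(m,k,j)$ subject to $m-k\leq j\leq m$, and each part of the corollary is obtained by choosing this triple appropriately.

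For part (i), the natural temptation is to set $j=0$ in Theorem \ref{sym}, but that would require $k\geq m$, so it would not cover every pair $(k,m)$. Instead, I would reparametrize: apply Theorem \ref{sym} with $k$ replaced by $k-1$ (assuming $k\geq 2$) and $j=m$. The range constraint becomes $m-(k-1)\leq m\leq m$, which holds whenever $k\geq 1$. The conclusion reads
\[
{\rm reg}(I^{((k-1)m+m)})\geq {\rm reg}(I^{(m)})+((k-1)-1)m+m,
\]
which simplifies exactly to ${\rm reg}(I^{(km)})\geq {\rm reg}(I^{(m)})+(k-1)m$. The case $k=1$ has to be noted separately, but there the desired inequality is the tautology ${\rm reg}(I^{(m)})\geq {\rm reg}(I^{(m)})$.

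For part (ii), I would specialize Theorem \ref{sym} to $m=2$, $k=1$, $j=1$. The constraint $m-k\leq j\leq m$ becomes $1\leq 1\leq 2$, which holds, and the conclusion is
\[
{\rm reg}(I^{(1\cdot 2 + 1)})\geq {\rm reg}(I^{(2)})+(1-1)\cdot 2+1,
\]
i.e.\ ${\rm reg}(I^{(3)})\geq {\rm reg}(I^{(2)})+1$, as desired.

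There is no real obstacle here; the only thing that requires a moment's thought is the shift $k\mapsto k-1$ in part (i), which is needed because the theorem's range $m-k\leq j\leq m$ only places $j=0$ in scope when $k\geq m$, whereas using $j=m$ works uniformly. Once this substitution is spotted, both parts are one-line consequences of Theorem \ref{sym}.
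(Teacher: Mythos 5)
Your proposal is correct and matches the paper, which states the corollary as an immediate consequence of Theorem \ref{sym} without further detail; your substitutions ($k\mapsto k-1$, $j=m$ for part (i) with the trivial case $k=1$ handled separately, and $m=2$, $k=1$, $j=1$ for part (ii)) are exactly the intended instantiations and satisfy the constraint $m-k\leq j\leq m$ in each case.
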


We recall that a special case of Corollary \ref{corsym}(i) (i.e., when $m=1$) is obtained by Minh and Vu \cite{mv}.

Using Theorem \ref{sym}, we are also able to provide an upper bound for the regularity of certain quotients of symbolic powers of a squarefree monomial ideal.

\begin{cor} \label{symq}
Let $I$ be a nonzero proper squarefree monomial ideal of $S$. Suppose that $m$ and $k$ are positive integers. Then for every integer $j$ with $m-k\leq j\leq m$, we have$${\rm reg}(I^{(m)}/I^{(km+j)})\leq {\rm reg}(I^{(km+j)})-1.$$
\end{cor}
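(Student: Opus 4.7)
The plan is to combine Theorem \ref{sym} with the standard regularity inequality for a short exact sequence. First observe that under the hypothesis $m-k\leq j$ together with $k,m\geq 1$, one has $km+j\geq (k+1)m-k\geq m$, so $I^{(km+j)}\subseteq I^{(m)}$ and the short exact sequence
$$0\longrightarrow I^{(km+j)} \longrightarrow I^{(m)} \longrightarrow I^{(m)}/I^{(km+j)} \longrightarrow 0$$
is well defined.

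From the long exact sequence of local cohomology with respect to the graded maximal ideal (equivalently, the standard regularity lemma applied to a short exact sequence of finitely generated graded modules), I would extract the bound
$${\rm reg}\bigl(I^{(m)}/I^{(km+j)}\bigr)\leq \max\bigl\{{\rm reg}(I^{(km+j)})-1,\ {\rm reg}(I^{(m)})\bigr\}.$$
Next, I would invoke Theorem \ref{sym}, which gives ${\rm reg}(I^{(km+j)})\geq {\rm reg}(I^{(m)})+(k-1)m+j$. Provided $(k-1)m+j\geq 1$, this rearranges to ${\rm reg}(I^{(km+j)})-1\geq {\rm reg}(I^{(m)})$, so the maximum above collapses to ${\rm reg}(I^{(km+j)})-1$, which is precisely the desired conclusion.

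It remains to dispose of the degenerate case $(k-1)m+j=0$. There, $km+j=m$, so $I^{(km+j)}=I^{(m)}$ and the quotient is the zero module; the stated inequality then holds trivially under the convention ${\rm reg}(0)=-\infty$. I do not anticipate a genuine obstacle: the argument reduces to quoting Theorem \ref{sym}, combining it with a routine short exact sequence estimate, and verifying the elementary arithmetic on $(k-1)m+j$ needed to see when the maximum is attained by the first term.
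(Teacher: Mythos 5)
Your proposal is correct and follows essentially the same route as the paper: the short exact sequence $0\to I^{(km+j)}\to I^{(m)}\to I^{(m)}/I^{(km+j)}\to 0$, the standard regularity estimate giving $\max\{{\rm reg}(I^{(m)}),\ {\rm reg}(I^{(km+j)})-1\}$, Theorem \ref{sym} to collapse the maximum when $(k-1)m+j\geq 1$, and the separate treatment of the degenerate case $km+j=m$. No gaps.
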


\begin{proof}
It follows from the assumptions that $(k-1)m+j\geq 0$. If $(k-1)m+j=0$, then $km+j=m$. In this case $I^{(m)}/I^{(km+j)}$ is the zero  module and there is nothing to prove. So, assume that $(k-1)m+j\geq 1$. Consider the following short exact sequence.
$$0\longrightarrow I^{(km+j)}\longrightarrow I^{(m)}\longrightarrow I^{(m)}/I^{(km+j)}\rightarrow 0$$
It follows from \cite[Corollary 18.7]{p'} that$${\rm reg}(I^{(m)}/I^{(km+j)})\leq\max\big\{{\rm reg}(I^{(m)}), {\rm reg}(I^{(km+j)})-1\big\}.$$Since $(k-1)m+j\geq 1$, the assertion follows by applying Theorem \ref{sym}.
\end{proof}

%%%%%%%%%%%%%%%%%%%%%%%%%%%%%%%%%%%%%%%%%%%%%%%%%%%%%%%%%%%%%%%%%%%%%%%%%%

\section{Integral closure of powers} \label{sec4}

In this section, we study the regularity of integral closure of powers of a monomial ideal $I$. Theorem \ref{rnormal1} is the first main result of this section which provides an upper bound for the regularity of $\overline{I}$ in terms of the regularity of certain powers of $I$. We need the following two lemmas in the proof of Theorem \ref{rnormal1}.

\begin{lem} \label{colonns}
Let $I$ be a proper nonzero monomial ideal of $S$. Then for every integer $s\geq 1$ and for every vector ${\rm\bf a}\in \mathbb{N}^n$ with $|{\rm\bf a}|\leq \gamma(I)s-1$, we have $\Delta_{\rm\bf a}(I^s)=\Delta(\sqrt{I})$.
\end{lem}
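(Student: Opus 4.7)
The plan is to establish the lemma by proving the ideal-theoretic identity
\[
\sqrt{I^s: {\rm\bf x^a}} = \sqrt{I},
\]
which gives the conclusion immediately since by definition $\Delta_{\rm\bf a}(I^s)=\Delta(\sqrt{I^s: {\rm\bf x^a}})$, while $\sqrt{I}$ is already a squarefree monomial ideal. The containment $\sqrt{I}\subseteq\sqrt{I^s: {\rm\bf x^a}}$ is immediate from $I^s\subseteq (I^s: {\rm\bf x^a})$ together with $\sqrt{I^s}=\sqrt{I}$, so the entire content of the lemma lies in the reverse inclusion.

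For that direction, my first step would be to upgrade the trivial containment $I\subseteq\mathfrak{p}$ (for any $\mathfrak{p}\in {\rm Min}(I)$) to the stronger statement $I\subseteq \mathfrak{p}^{\gamma(I)}$. Every minimal prime of a monomial ideal has the form $\mathfrak{p}=(x_i : i\in A)$ for some $A\subseteq [n]$, and each $u\in G(I)\cap \mathfrak{p}$ must be divisible by at least one $x_i$ with $i\in A$. The definition of $\gamma(I)$ then forces $\deg_{x_i}(u)\geq \gamma(I)$ for that index, hence $\sum_{i\in A}\deg_{x_i}(u)\geq \gamma(I)$, which is exactly the condition $u\in \mathfrak{p}^{\gamma(I)}$ (a monomial $m$ lies in the power of the monomial prime $\mathfrak{p}$ precisely when its total $A$-degree is large enough). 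Raising to the $s$-th power yields $I^s\subseteq \mathfrak{p}^{s\gamma(I)}$ for every $\mathfrak{p}\in {\rm Min}(I)$.

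The remainder is a short contradiction argument. Given a monomial $u\in\sqrt{I^s: {\rm\bf x^a}}$, pick $t\geq 1$ with $u^t{\rm\bf x^a}\in I^s$, and suppose $u\notin \sqrt{I}=\bigcap_{\mathfrak{p}\in {\rm Min}(I)}\mathfrak{p}$. Then some minimal prime $\mathfrak{p}=(x_i : i\in A)$ satisfies $u\notin \mathfrak{p}$, i.e.\ ${\rm supp}(u)\cap A=\emptyset$. From $u^t{\rm\bf x^a}\in I^s\subseteq \mathfrak{p}^{s\gamma(I)}$ one obtains
\[
s\gamma(I)\leq \sum_{i\in A}\deg_{x_i}(u^t{\rm\bf x^a})=\sum_{i\in A}a_i\leq |{\rm\bf a}|,
\]
contradicting the hypothesis $|{\rm\bf a}|\leq s\gamma(I)-1$ and finishing the proof.

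I do not anticipate any serious obstacle here; the whole argument pivots on the single observation that $\gamma(I)$ upgrades $I\subseteq \mathfrak{p}$ to $I\subseteq \mathfrak{p}^{\gamma(I)}$, which is precisely the kind of quantitative strengthening that $\gamma(I)$ was introduced to exploit, exactly as in the proof of Theorem \ref{rrad}.
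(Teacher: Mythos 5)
Your proof is correct and follows the same route as the paper, which simply asserts that $(I^s:{\rm\bf x^a})\subseteq\sqrt{I}$ is ``easy to check''; your observation that $I\subseteq\mathfrak{p}^{\gamma(I)}$ for every minimal prime $\mathfrak{p}$, followed by the degree count against $|{\rm\bf a}|\leq\gamma(I)s-1$, is precisely the verification being elided there. No gaps.
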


\begin{proof}
It is enough to prove that $\sqrt{I^s:{\rm\bf x^a}}=\sqrt{I}$. Trivially, we have $\sqrt{I}\subseteq\sqrt{I^s:{\rm\bf x^a}}$. On the other hand, since $|{\rm\bf a}|\leq \gamma(I)s-1$, it is easy to check that $(I^s:{\rm\bf x^a})\subseteq \sqrt{I}$ and hence, $\sqrt{I^s:{\rm\bf x^a}}\subseteq\sqrt{I}$.
\end{proof}

\begin{lem} \label{colonnsi}
Let $I$ be a proper nonzero monomial ideal of $S$. Then for every integer $s\geq 1$ and for every vector ${\rm\bf a}\in \mathbb{N}^n$ with $|{\rm\bf a}|\leq \gamma(I)s-1$, we have $\Delta_{\rm\bf a}(\overline{I^s})=\Delta(\sqrt{I})$.
\end{lem}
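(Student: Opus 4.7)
The plan is to reduce the statement for the integral closure $\overline{I^s}$ to the corresponding statement for the ordinary power $I^{sk}$, where the latter is exactly the content of Lemma \ref{colonns}. Since the equality $\Delta_{\mathbf{a}}(\overline{I^s}) = \Delta(\sqrt{I})$ amounts, by definition, to $\sqrt{\overline{I^s} : \mathbf{x^a}} = \sqrt{I}$, I would split the argument into the two inclusions.

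First, for the inclusion $\sqrt{I} \subseteq \sqrt{\overline{I^s} : \mathbf{x^a}}$, I would simply observe that $\overline{I^s} \subseteq (\overline{I^s} : \mathbf{x^a})$, so $\sqrt{I} = \sqrt{I^s} = \sqrt{\overline{I^s}} \subseteq \sqrt{\overline{I^s} : \mathbf{x^a}}$; this uses only that the radical is insensitive to integral closure.

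For the reverse inclusion $\sqrt{\overline{I^s} : \mathbf{x^a}} \subseteq \sqrt{I}$, I would invoke the monomial description of the integral closure from \cite[Theorem 1.4.2]{hh'}: a monomial $v$ lies in $\overline{J}$ iff $v^k \in J^k$ for some positive integer $k$. Take a monomial $v \in \sqrt{\overline{I^s} : \mathbf{x^a}}$. Then there exists $t \geq 1$ with $v^t \mathbf{x^a} \in \overline{I^s}$, and hence there exists $k \geq 1$ with
\[
(v^t \mathbf{x^a})^k = v^{tk}\, \mathbf{x}^{k\mathbf{a}} \in (I^s)^k = I^{sk}.
\]
Now I would apply Lemma \ref{colonns} to the ideal $I$ with exponent $sk$ and vector $k\mathbf{a}$: the hypothesis is met since $|k\mathbf{a}| = k|\mathbf{a}| \leq k(\gamma(I)s - 1) \leq \gamma(I)(sk) - 1$. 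Therefore $\sqrt{I^{sk} : \mathbf{x}^{k\mathbf{a}}} = \sqrt{I}$, and from $v^{tk}\mathbf{x}^{k\mathbf{a}} \in I^{sk}$ I would deduce $v^{tk} \in \sqrt{I}$, hence $v \in \sqrt{I}$, as desired.

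There is no real obstacle here: the entire argument is a one-line reduction via the characterization of integral closure, with the work already packaged inside Lemma \ref{colonns}. The only point worth double-checking is the bound $k(\gamma(I)s - 1) \leq \gamma(I)(sk) - 1$, which is immediate for $k \geq 1$ and ensures that the hypothesis of Lemma \ref{colonns} transfers after raising to the $k$-th power.
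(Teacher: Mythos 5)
Your proof is correct and follows essentially the same route as the paper: both arguments handle the easy inclusion $\sqrt{I}\subseteq\sqrt{\overline{I^s}:{\rm\bf x^a}}$ trivially and, for the reverse inclusion, use the monomial characterization of integral closure to pass from $u^t{\rm\bf x^a}\in\overline{I^s}$ to $u^{tk}{\rm\bf x}^{k{\rm\bf a}}\in I^{sk}$ and then invoke Lemma \ref{colonns} with the bound $k(\gamma(I)s-1)\leq \gamma(I)sk-1$. No discrepancies.
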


\begin{proof}
It is enough to prove that $\sqrt{\overline{I^s}:{\rm\bf x^a}}=\sqrt{I}$. Trivially, we have $\sqrt{I}\subseteq\sqrt{\overline{I^s}:{\rm\bf x^a}}$. To prove the reverse inclusion, let $u$ be a monomial in $\sqrt{\overline{I^s}:{\rm\bf x^a}}$. Then $u^t{\rm\bf x^a}\in \overline{I^s}$, for some integer $t\geq 1$. Hence, there is an integer $k\geq 1$ such that $u^{kt}{\rm\bf x}^{k{\rm\bf a}}\in I^{ks}$. Therefore, $u\in\sqrt{I^{ks}: {\rm\bf x}^{k{\rm\bf a}}}$. Since$$|k{\rm\bf a}|\leq k(\gamma(I)s-1)\leq k\gamma(I)s-1,$$it follows from Lemma \ref{colonns} that $u \in \sqrt{I}$. Consequently, $\sqrt{\overline{I^s}:{\rm\bf x^a}}\subseteq \sqrt{I}$.
\end{proof}

We are now ready to prove the first main result of this section.

\begin{thm} \label{rnormal1}
Let $I$ be a proper nonzero monomial ideal of $S$ with analytic spread $\ell=\ell(I)$. Set $s=\mu(\overline{I^{\ell-1}})!$. Then for every integer $m\geq 1$, we have$${\rm reg}(\overline{I})\leq {\rm reg}(I^{sm})-\gamma(I)(sm-1).$$
\end{thm}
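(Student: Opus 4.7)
The plan is to apply Lemma \ref{mv} to ${\rm reg}(\overline{I})$ and transport the resulting combinatorial data to $I^{sm}$, in the spirit of the proof of Theorem \ref{sym}. Fix ${\rm\bf a} \in \mathbb{N}^n$, $i \geq 0$, and a face $F \in \Delta_{\rm\bf a}(\overline{I})$ with $F \cap {\rm supp}({\rm\bf a}) = \emptyset$ such that ${\rm reg}(\overline{I}) = |{\rm\bf a}| + i + 1$ and $\tilde{H}_{i-1}({\rm lk}_{\Delta_{\rm\bf a}(\overline{I})} F; \mathbb{K}) \neq 0$. The goal is to exhibit ${\rm\bf b} \in \mathbb{N}^n$ with $|{\rm\bf b}| \geq |{\rm\bf a}| + \gamma(I)(sm-1)$, $F \cap {\rm supp}({\rm\bf b}) = \emptyset$, and $\tilde{H}_{i-1}({\rm lk}_{\Delta_{\rm\bf b}(I^{sm})} F; \mathbb{K}) \neq 0$; Lemma \ref{mv} then yields the conclusion directly.

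I would split on the size of $|{\rm\bf a}|$ relative to $\gamma(I)$. If $|{\rm\bf a}| \leq \gamma(I) - 1$, Lemma \ref{colonnsi} identifies $\Delta_{\rm\bf a}(\overline{I}) = \Delta(\sqrt{I})$; since $I$ is a proper nonzero monomial ideal, $F$ cannot equal $[n]$, so I pick some $r \in [n] \setminus F$ and take ${\rm\bf b}$ with ${\rm\bf x^b} = x_r^{\gamma(I)sm - 1 - |{\rm\bf a}|}\,{\rm\bf x^a}$, so that $|{\rm\bf b}| = \gamma(I)sm - 1$ and ${\rm supp}({\rm\bf b}) \cap F = \emptyset$. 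Lemma \ref{colonns} then gives $\Delta_{\rm\bf b}(I^{sm}) = \Delta(\sqrt{I}) = \Delta_{\rm\bf a}(\overline{I})$, preserving the nonvanishing homology at $F$; the numerical check $\gamma(I)sm + i \geq |{\rm\bf a}| + i + 1 + \gamma(I)(sm-1)$ is precisely equivalent to the Case 1 hypothesis.

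If $|{\rm\bf a}| \geq \gamma(I)$, I would set ${\rm\bf b} := sm \cdot {\rm\bf a}$, so that $|{\rm\bf b}| = sm|{\rm\bf a}| \geq |{\rm\bf a}| + \gamma(I)(sm-1)$ and the support condition is automatic. The key step is to establish
$$\sqrt{\overline{I}:{\rm\bf x^a}} = \sqrt{I^{sm}:{\rm\bf x}^{sm{\rm\bf a}}},$$
so that the link of $F$ is unchanged. The inclusion $\supseteq$ is routine: from $u^t({\rm\bf x^a})^{sm} \in I^{sm}$, multiplying by $u^{t(sm-1)}$ yields $(u^t{\rm\bf x^a})^{sm} \in I^{sm}$, which forces $u^t{\rm\bf x^a} \in \overline{I}$ by the very definition of integral closure.

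The main obstacle is the reverse inclusion $\subseteq$: from $u^t{\rm\bf x^a} \in \overline{I}$ I need $(u^t{\rm\bf x^a})^{sm} \in I^{sm}$, which reduces to the uniform assertion that $w^s \in I^s$ for every monomial $w \in \overline{I}$ (since then $w^{sm} = (w^s)^m \in I^{sm}$). This is exactly where the choice $s = \mu(\overline{I^{\ell-1}})!$ must be used. I would approach this uniformity by combining the elementary inclusion $(\overline{I})^{\ell-1} \subseteq \overline{I^{\ell-1}}$ (so $w^{\ell-1}$ is divisible by some generator of $\overline{I^{\ell-1}}$) with a Brian\c{c}on--Skoda-type control that bounds, for each of the $\mu(\overline{I^{\ell-1}})$ generators $g$ of $\overline{I^{\ell-1}}$, an exponent $k_g$ with $g^{k_g} \in I^{(\ell-1)k_g}$; the factorial then absorbs the resulting divisibility constraints and yields $w^s \in I^s$ uniformly in $w$. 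Once this uniform divisibility is in hand, Lemma \ref{mv} applied to $({\rm\bf b}, i, F)$ immediately delivers ${\rm reg}(I^{sm}) \geq |{\rm\bf b}| + i + 1 \geq {\rm reg}(\overline{I}) + \gamma(I)(sm-1)$, which rearranges to the stated inequality.
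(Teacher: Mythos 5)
Your proposal follows the paper's argument almost step for step: the same application of Lemma \ref{mv} to $\overline{I}$, the same two-case split on $|{\rm\bf a}|$ versus $\gamma(I)$, the same use of Lemmas \ref{colonns} and \ref{colonnsi} in Case 1 (your choice ${\rm\bf x^b}=x_r^{\gamma(I)sm-1-|{\rm\bf a}|}{\rm\bf x^a}$ differs only cosmetically from the paper's $x_r^{\gamma(I)(sm-1)}{\rm\bf x^a}$, and your numerical check for that case is correct), and the same key claim $\sqrt{\overline{I}:{\rm\bf x^a}}=\sqrt{I^{sm}:{\rm\bf x}^{(sm){\rm\bf a}}}$ in Case 2, with the same correct proof of the inclusion $\supseteq$.

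The one genuine gap is the uniform statement that $v^s\in I^s$ for every monomial $v\in\overline{I}$. You correctly isolate this as the crux and as the only place where $s=\mu(\overline{I^{\ell-1}})!$ enters, but your proposed derivation does not close. The paper obtains it by citing \cite[Lemma 4.4]{s13}, which gives, for each minimal generator $v_j$ of $\overline{I}$, an integer $k_j\leq\mu(\overline{I^{\ell-1}})$ with $v_j^{k_j}\in I^{k_j}$; since $k_j$ divides $s$, one gets $v_j^s=(v_j^{k_j})^{s/k_j}\in I^s$, and the statement for an arbitrary monomial of $\overline{I}$ follows because it is a multiple of some $v_j$. Your route via $(\overline{I})^{\ell-1}\subseteq\overline{I^{\ell-1}}$ together with exponents $k_g$ satisfying $g^{k_g}\in I^{(\ell-1)k_g}$ would at best yield $w^{N}\in I^{N}$ for $N=(\ell-1)k_g$, and $(\ell-1)k_g$ need not divide $\mu(\overline{I^{\ell-1}})!$ (for instance $\ell-1=5$, $k_g=3$, $\mu(\overline{I^{\ell-1}})=4$ gives $N=15$ and $s=24$), so the factorial does not ``absorb the divisibility constraints'' as claimed. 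What is needed is a bound by $\mu(\overline{I^{\ell-1}})$ on the exponent $k$ itself in a relation $v^{k}\in I^{k}$, not on $k$ in a relation involving $I^{(\ell-1)k}$; that is precisely the content of the cited lemma, and without it (or an equivalent uniform bound) Case 2 is incomplete.
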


\begin{proof}
By Lemma \ref{mv}, there exists a vector ${\rm\bf a}\in \mathbb{N}^n$ and an integer $i\geq 0$ such that ${\rm reg}(\overline{I})=|{\rm\bf a}|+i+1$ and $\tilde{H}_{i-1}({\rm lk}_{\Delta_{\rm\bf a}(\overline{I})}F; \mathbb{K})\neq 0$, for some face $F\in \Delta_{\rm\bf a}(\overline{I})$ with $F\cap {\rm supp}({\rm\bf a})=\emptyset$. We divide the proof into the following two cases.

\vspace{0.3cm}
{\bf Case 1.} Assume that $|{\rm\bf a}|\leq \gamma(I)-1$. Then it follows from Lemma \ref{colonnsi} that $\Delta_{\rm\bf a}(\overline{I})=\Delta(\sqrt{I})$. If $F=[n]$, it follows from $F\in \Delta_{\rm\bf a}(\overline{I})=\Delta(\sqrt{I})$ that $I=0$ which is a contradiction. Hence, $F\neq [n]$. So, there is an element $r\in [n]\setminus F$. Let ${\rm\bf b}\in \mathbb{N}^n$ be the vector with ${\rm\bf x^b}=x_r^{\gamma(I)(sm-1)}{\rm\bf x^a}$. Then$$|{\rm\bf b}|=\gamma(I)(sm-1)+|{\rm\bf a}|\leq \gamma(I)sm-1.$$Consequently, Lemma \ref{colonns} yields that $\Delta_{\rm\bf b}(I^{sm})=\Delta(\sqrt{I})$. Therefore,$$\tilde{H}_{i-1}({\rm lk}_{\Delta_{\rm\bf b}(I^{sm})}F; \mathbb{K})=\tilde{H}_{i-1}({\rm lk}_{\Delta(\sqrt{I})}F; \mathbb{K})=\tilde{H}_{i-1}({\rm lk}_{\Delta_{\rm\bf a}(\overline{I})}F; \mathbb{K})\neq 0.$$Hence, we conclude from Lemma \ref{mv} that
\begin{align*}
&{\rm reg}(I^{sm})\geq |{\rm\bf b}|+i+1=\gamma(I)(sm-1)+|{\rm\bf a}|+i+1=\gamma(I)(sm-1)+{\rm reg}(\overline{I}).
\end{align*}

\vspace{0.3cm}
{\bf Case 2.} Assume that $|{\rm\bf a}|\geq\gamma(I)$. We claim that$$\sqrt{\overline{I}:{\rm\bf x^a}}=\sqrt{I^{sm}:{\rm\bf x}^{(sm){\rm\bf a}}}.$$

To prove the claim, let $u$ be a monomial in $\sqrt{I^{sm}:{\rm\bf x}^{(sm){\rm\bf a}}}$. Then $u^t{\rm\bf x}^{(sm){\rm\bf a}}\in I^{sm}$, for some integer $t\geq 1$. Consequently,  $u^{t(sm)}{\rm\bf x}^{(sm){\rm\bf a}}\in I^{sm}$. This implies that $u^t{\rm\bf x^a}\in \overline{I}$ and so, $u\in \sqrt{\overline{I}:{\rm\bf x^a}}$. Therefore,$$\sqrt{I^{sm}:{\rm\bf x}^{(sm){\rm\bf a}}}\subseteq \sqrt{\overline{I}:{\rm\bf x^a}}.$$To prove the reverse inclusion, suppose that $G(\overline{I})=\{v_1, \ldots, v_p\}$ is the set of minimal monomial generators of $\overline{I}$. Using \cite[Lemma 4.4]{s13}, for each $j=1, \ldots, p$ there is an integer $k_j\leq \mu(\overline{I^{\ell-1}})$, such that $v_j^{k_j}\in I^{k_j}$. As $k_j$ divides $s$ for every $j$, we conclude that $v_j^s\in I^s$. Hence, for each monomial $v\in \overline{I}$, we have $v^{sm}\in I^{sm}$. Now assume that $w$ is a monomial in $\sqrt{\overline{I}:{\rm\bf x^a}}$. Then $w^l{\rm\bf x^a}\in \overline{I}$, for some integer $l\geq 1$. The above discussion implies that $w^{(sm)l}{\rm\bf x}^{(sm){\rm\bf a}}\in I^{sm}$. Thus, $w\in \sqrt{I^{sm}:{\rm\bf x}^{(sm){\rm\bf a}}}$. This shows that$$\sqrt{\overline{I}:{\rm\bf x^a}}\subseteq\sqrt{I^{sm}:{\rm\bf x}^{(sm){\rm\bf a}}},$$and completes the proof of the claim.

Set ${\rm\bf c}:=(sm){\rm\bf a}$. It follows from the claim that $\Delta_{\rm\bf a}(\overline{I})=\Delta_{\rm\bf c}(I^{sm})$. Consequently,$$\tilde{H}_{i-1}({\rm lk}_{\Delta_{\rm\bf c}(I^{sm})}F; \mathbb{K})=\tilde{H}_{i-1}({\rm lk}_{\Delta_{\rm\bf a}(\overline{I})}F; \mathbb{K})\neq 0.$$Therefore, we deduce from Lemma \ref{mv} that
\begin{align*}
&{\rm reg}(I^{sm})\geq |{\rm\bf c}|+i+1=(sm)|{\rm\bf a}|+i+1=(sm-1)|{\rm\bf a}|+|{\rm\bf a}|+i+1\\ &\geq(sm-1)\gamma(I)+{\rm reg}(\overline{I}),
\end{align*}
where the last inequality follows from $|{\rm\bf a}|\geq\gamma(I)$.
\end{proof}

\begin{rem} \label{remint}
Suppose $I$ is a monomial ideal of $S$. Let $s\geq 1$ be an integer with the property that for any monomial $u\in G(\overline{I})$, we have $u^s\in I^s$. Then the proof of Theorem \ref{rnormal1} shows that for any integer $m\geq 1$,$${\rm reg}(\overline{I})\leq {\rm reg}(I^{sm})-\gamma(I)(sm-1).$$
\end{rem}

The following theorem shows that the regularity of the $m$-th power of an integrally closed monomial ideal $I$ is at least ${\rm reg}(I)+\gamma(I)(m-1)$. We recall that a special case of this result (i.e., when $I$ is a squarefree monomial ideal) is proved by Minh and Vu \cite{mv}.

\begin{thm} \label{rintc}
Let $I$ be a proper nonzero integrally closed monomial ideal of $S$. Then for every integer $m\geq 1$, we have$${\rm reg}(I^m)\geq {\rm reg}(I)+\gamma(I)(m-1).$$
\end{thm}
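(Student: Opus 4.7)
The plan is to obtain this as an immediate corollary of Remark \ref{remint} by taking $s=1$. Since $I$ is integrally closed, we have $\overline{I}=I$, so $G(\overline{I})=G(I)\subseteq I$; in particular, for every $u\in G(\overline{I})$ one trivially has $u^{1}=u\in I=I^{1}$. This means $s=1$ satisfies the hypothesis of Remark \ref{remint}. Applying that remark with $s=1$ and the given $m$ yields
$${\rm reg}(\overline{I})\leq {\rm reg}(I^{m})-\gamma(I)(m-1).$$
Using $\overline{I}=I$ once more, this rearranges to ${\rm reg}(I^{m})\geq{\rm reg}(I)+\gamma(I)(m-1)$, as required.

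There is essentially no obstacle once Remark \ref{remint} is in hand: the role of integrally closed is exactly to make the number $s$ appearing in Theorem \ref{rnormal1} equal to $1$, which eliminates the factor $s$ from both the factorial exponent and from the inequality. If a self-contained proof were preferred, one could simply replay the two-case analysis in the proof of Theorem \ref{rnormal1} with $s=1$. Pick, via Lemma \ref{mv}, a vector $\mathbf{a}\in\mathbb{N}^n$, an integer $i\geq 0$ and a face $F\in\Delta_{\mathbf{a}}(I)$ with $F\cap{\rm supp}(\mathbf{a})=\emptyset$ realizing ${\rm reg}(I)=|\mathbf{a}|+i+1$, then split according to whether $|\mathbf{a}|\leq\gamma(I)-1$ or $|\mathbf{a}|\geq\gamma(I)$.

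In the first case, Lemma \ref{colonnsi} gives $\Delta_{\mathbf{a}}(I)=\Delta(\sqrt{I})$, and one produces a witness vector $\mathbf{b}$ with $\mathbf{x^{b}}=x_{r}^{\gamma(I)(m-1)}\mathbf{x^{a}}$ for some $r\in[n]\setminus F$, applies Lemma \ref{colonns} to see $\Delta_{\mathbf{b}}(I^{m})=\Delta(\sqrt{I})$, and invokes Lemma \ref{mv}. In the second case, the key step is to establish the colon identity $\sqrt{I:\mathbf{x^{a}}}=\sqrt{I^{m}:\mathbf{x}^{m\mathbf{a}}}$; the nontrivial inclusion uses precisely that any monomial $v\in\overline{I}=I$ satisfies $v^{m}\in I^{m}$ (trivial when $s=1$, which is the simplification brought in by integral closedness). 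One then concludes with Lemma \ref{mv} as in Case 2 of Theorem \ref{rnormal1}, using $|\mathbf{a}|\geq\gamma(I)$ to produce the factor $\gamma(I)(m-1)$.
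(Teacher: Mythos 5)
Your proposal is correct and follows essentially the same route as the paper, which likewise proves the theorem by running the argument of Theorem \ref{rnormal1} with $s=1$ and explicitly points to Remark \ref{remint}. Your observation that integral closedness makes $u^1\in I^1$ trivially true for every $u\in G(\overline{I})$ is exactly the reduction the paper uses.
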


\begin{proof}
The same argument as in the proof of Theorem \ref{rnormal1} implies the assertion. The only difference is that since $I$ is integrally closed, for any monomial $u\in \overline{I}$, we have $u\in I$. So, in the proof of Theorem \ref{rnormal1}, one may replace $s$ by $1$ (see also Remark \ref{remint}).
\end{proof}

Let $I$ be a monomial ideal of $S$. In Theorem \ref{rint}, we study the relation between regularities of integral closure of certain symbolic powers of $I$. More precisely, we prove that for any pair of integers $s,m\geq 1$, the regularity of $\overline{I^{sm}}$ is bounded below by ${\rm reg}(\overline{I^s})+\gamma(I)s(m-1)$.

\begin{thm} \label{rint}
Let $I$ be a proper nonzero monomial ideal of $S$. Then for every pair of integers $s,m\geq 1$, we have$${\rm reg}(\overline{I^{sm}})\geq {\rm reg}(\overline{I^s})+\gamma(I)s(m-1).$$
\end{thm}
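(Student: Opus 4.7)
The plan is to follow the two-case template that has worked throughout Sections \ref{sec3} and \ref{sec4}. First, I would apply Lemma \ref{mv} to the regularity on the smaller side and pick a witness: a vector ${\rm\bf a}\in\mathbb{N}^n$, an integer $i\geq 0$, and a face $F\in\Delta_{\rm\bf a}(\overline{I^s})$ disjoint from ${\rm supp}({\rm\bf a})$ that realize ${\rm reg}(\overline{I^s})=|{\rm\bf a}|+i+1$ with nonvanishing reduced homology of the corresponding link. The aim is then to construct a new vector ${\rm\bf c}$ of norm at least $|{\rm\bf a}|+\gamma(I)s(m-1)$ for which $\Delta_{\rm\bf c}(\overline{I^{sm}})$ agrees with $\Delta_{\rm\bf a}(\overline{I^s})$, so that $F$ together with the same homology class witnesses the desired lower bound via Lemma \ref{mv}. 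The case split is according to whether $|{\rm\bf a}|\leq\gamma(I)s-1$ or $|{\rm\bf a}|\geq\gamma(I)s$.

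In the small case, Lemma \ref{colonnsi} identifies $\Delta_{\rm\bf a}(\overline{I^s})$ with $\Delta(\sqrt{I})$. The hypothesis that $I$ is a nonzero proper ideal forces $F\neq[n]$ (by the argument used in Case 1 of Theorem \ref{rnormal1}), so I may choose $r\in[n]\setminus F$ and set ${\rm\bf b}={\rm\bf a}+\gamma(I)s(m-1)\,e_r$. Since $|{\rm\bf b}|\leq \gamma(I)sm-1$, a second application of Lemma \ref{colonnsi} (now to $\overline{I^{sm}}$) again returns $\Delta(\sqrt{I})$, the face $F$ and the chosen homology class transfer over, and Lemma \ref{mv} yields ${\rm reg}(\overline{I^{sm}})\geq|{\rm\bf b}|+i+1={\rm reg}(\overline{I^s})+\gamma(I)s(m-1)$.

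In the large case, I would prove the radical identity
$$\sqrt{\overline{I^s}:{\rm\bf x^a}}=\sqrt{\overline{I^{sm}}:{\rm\bf x}^{m{\rm\bf a}}}$$
and then set ${\rm\bf c}=m{\rm\bf a}$; the identity gives $\Delta_{\rm\bf a}(\overline{I^s})=\Delta_{\rm\bf c}(\overline{I^{sm}})$, and the bound follows from $|{\rm\bf c}|=m|{\rm\bf a}|=(m-1)|{\rm\bf a}|+|{\rm\bf a}|\geq (m-1)\gamma(I)s+|{\rm\bf a}|$, combined with ${\rm reg}(\overline{I^s})=|{\rm\bf a}|+i+1$. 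The main obstacle, and the only nontrivial content of the argument, is this identity. The forward inclusion is straightforward: if $u^t{\rm\bf x^a}\in\overline{I^s}$, then raising to the $m$-th power and using the monomial criterion $v\in\overline{J}\iff v^k\in J^k$ for some $k$ (with $J=I^s$) shows that $u^{tm}{\rm\bf x}^{m{\rm\bf a}}\in\overline{I^{sm}}$. The reverse inclusion is the subtler one: if $u^t{\rm\bf x}^{m{\rm\bf a}}\in\overline{I^{sm}}$, one obtains $k\geq 1$ with $u^{tk}{\rm\bf x}^{mk{\rm\bf a}}\in I^{smk}$, and the trick is to multiply by the extra factor $u^{tk(m-1)}$ to recast this as $(u^t{\rm\bf x^a})^{mk}\in (I^s)^{mk}$, which upgrades to $u^t{\rm\bf x^a}\in\overline{I^s}$. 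This amplification idea parallels the analogous claim in the proof of Theorem \ref{rnormal1}, and once it is in place the remainder of the argument is routine bookkeeping.
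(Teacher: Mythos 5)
Your proposal is correct and follows essentially the same route as the paper's proof: the same case split on $|{\rm\bf a}|$ versus $\gamma(I)s-1$, the same use of Lemma \ref{colonnsi} in the small case, and the same radical identity $\sqrt{\overline{I^s}:{\rm\bf x^a}}=\sqrt{\overline{I^{sm}}:{\rm\bf x}^{m{\rm\bf a}}}$ proved by the same amplification trick (absorbing the extra factor $u^{tk(m-1)}$ to rewrite the membership as $(u^t{\rm\bf x^a})^{mk}\in (I^s)^{mk}$) in the large case. No substantive differences.
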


\begin{proof}
By Lemma \ref{mv}, there exists a vector ${\rm\bf a}\in \mathbb{N}^n$ and an integer $i\geq 0$ such that ${\rm reg}(\overline{I^s})=|{\rm\bf a}|+i+1$ and $\tilde{H}_{i-1}({\rm lk}_{\Delta_{\rm\bf a}(\overline{I^s})}F; \mathbb{K})\neq 0$, for some face $F\in \Delta_{\rm\bf a}(\overline{I})$ with $F\cap {\rm supp}({\rm\bf a})=\emptyset$. We divide the proof into the following two cases.

\vspace{0.3cm}
{\bf Case 1.} Assume that $|{\rm\bf a}|\leq \gamma(I)s-1$. Then we conclude from Lemma \ref{colonnsi} that $\Delta_{\rm\bf a}(\overline{I^s})=\Delta(\sqrt{I})$. If $F=[n]$, it follows from $F\in \Delta_{\rm\bf a}(\overline{I^s})=\Delta(\sqrt{I})$ that $I=0$ which is a contradiction. Hence, $F\neq [n]$. So, there is an element $r\in [n]\setminus F$. Let ${\rm\bf b}\in \mathbb{N}^n$ be the vector with ${\rm\bf x^b}=x_r^{\gamma(I)s(m-1)}{\rm\bf x^a}$. As $|{\rm\bf a}|\leq \gamma(I)s-1$, we have$$|{\rm\bf b}|=\gamma(I)s(m-1)+|{\rm\bf a}|\leq \gamma(I)sm-1.$$Thus, Lemma \ref{colonnsi} yields that $\Delta_{\rm\bf b}(\overline{I^{sm}})=\Delta(\sqrt{I})$. Therefore,$$\tilde{H}_{i-1}({\rm lk}_{\Delta_{\rm\bf b}(\overline{I^{sm}})}F; \mathbb{K})=\tilde{H}_{i-1}({\rm lk}_{\Delta_{\rm\bf a}(\sqrt{I})}F; \mathbb{K})=\tilde{H}_{i-1}({\rm lk}_{\Delta_{\rm\bf a}(\overline{I^s})}F; \mathbb{K})\neq 0.$$Hence, Lemma \ref{mv} implies that
\begin{align*}
&{\rm reg}(\overline{I^{sm}})\geq |{\rm\bf b}|+i+1=\gamma(I)s(m-1)+|{\rm\bf a}|+i+1=\gamma(I)s(m-1)+{\rm reg}(\overline{I^s}).
\end{align*}

\vspace{0.3cm}
{\bf Case 2.} Assume that $|{\rm\bf a}|\geq \gamma(I)s$. We claim that$$\sqrt{\overline{I^s}:{\rm\bf x^a}}=\sqrt{\overline{I^{sm}}:{\rm\bf x}^{m{\rm\bf a}}}.$$

To prove the claim, let $u$ be a monomial in $\sqrt{\overline{I^{sm}}:{\rm\bf x}^{m{\rm\bf a}}}$. Then $u^t{\rm\bf x}^{m{\rm\bf a}}\in \overline{I^{sm}}$, for some integer $t\geq 1$. Hence, there exists an integer $k\geq 1$ such that $u^{kt}{\rm\bf x}^{(km){\rm\bf a}}\in I^{ksm}$. Consequently,  $u^{ktm}{\rm\bf x}^{(km){\rm\bf a}}\in I^{ksm}$. This implies that $u^t{\rm\bf x}^{\rm\bf a}\in \overline{I^s}$ and So, $u\in \sqrt{\overline{I^s}:{\rm\bf x^a}}$. Therefore,$$\sqrt{\overline{I^{sm}}:{\rm\bf x}^{m{\rm\bf a}}}\subseteq \sqrt{\overline{I^s}:{\rm\bf x^a}}.$$To prove the reverse inclusion, let $v$ be a monomial in $\sqrt{\overline{I^s}:{\rm\bf x^a}}$. Then $v^l{\rm\bf x^a}\in \overline{I^s}$, for some integer $l\geq 1$. Thus, $v^{ml}{\rm\bf x}^{m{\rm\bf a}}\in \overline{I^{ms}}$. Hence, $v\in \sqrt{\overline{I^{sm}}:{\rm\bf x}^{m{\rm\bf a}}}$. This shows that$$\sqrt{\overline{I^s}:{\rm\bf x^a}}\subseteq\sqrt{\overline{I^{sm}}:{\rm\bf x}^{m{\rm\bf a}}},$$and completes the proof of the claim.

Set ${\rm\bf c}:=m{\rm\bf a}$. It follows from the claim that $\Delta_{\rm\bf a}(\overline{I^s})=\Delta_{\rm\bf c}(\overline{I^{sm}})$. Consequently,$$\tilde{H}_{i-1}({\rm lk}_{\Delta_{\rm\bf c}(\overline{I^{sm}})}F; \mathbb{K})=\tilde{H}_{i-1}({\rm lk}_{\Delta_{\rm\bf a}(\overline{I^s})}F; \mathbb{K})\neq 0.$$Therefore, we deduce from Lemma \ref{mv} that
\begin{align*}
&{\rm reg}(\overline{I^{sm}})\geq |{\rm\bf c}|+i+1=m|{\rm\bf a}|+i+1=(m-1)|{\rm\bf a}|+|{\rm\bf a}|+i+1\\ &\geq (m-1)\gamma(I)s+{\rm reg}(\overline{I^s}),
\end{align*}
where the last inequality follows from $|{\rm\bf a}|\geq \gamma(I)s$.
\end{proof}

Using Theorem \ref{rint}, we are able to provide an upper bound for the regularity of certain quotients of integral closure of powers of a monomial ideal.

\begin{cor} \label{intq}
Let $I$ be a nonzero proper monomial ideal of $S$. Then for every pair of integers $s,m\geq 1$, we have$${\rm reg}(\overline{I^s}/\overline{I^{sm}})\leq {\rm reg}(\overline{I^{sm}})-1.$$
\end{cor}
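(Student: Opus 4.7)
The plan is to mimic the proof of Corollary \ref{symq} almost verbatim, with Theorem \ref{rint} replacing Theorem \ref{sym}. First I would dispose of the trivial case $m=1$: in that case $\overline{I^s}/\overline{I^{sm}} = 0$, so the left-hand side is $-\infty$ and the inequality holds automatically. Thus I may assume $m \geq 2$.

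Next, since $I^{sm}\subseteq I^s$, we have $\overline{I^{sm}}\subseteq \overline{I^s}$, which gives the short exact sequence
$$0 \longrightarrow \overline{I^{sm}} \longrightarrow \overline{I^s} \longrightarrow \overline{I^s}/\overline{I^{sm}} \longrightarrow 0.$$
Applying \cite[Corollary 18.7]{p'} to this sequence yields
$${\rm reg}(\overline{I^s}/\overline{I^{sm}}) \leq \max\bigl\{{\rm reg}(\overline{I^s}),\ {\rm reg}(\overline{I^{sm}})-1\bigr\}.$$

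Now I would invoke Theorem \ref{rint}, which gives ${\rm reg}(\overline{I^{sm}}) \geq {\rm reg}(\overline{I^s}) + \gamma(I) s (m-1)$. Since $\gamma(I) \geq 1$, $s \geq 1$, and $m-1 \geq 1$, this implies $\gamma(I)s(m-1) \geq 1$, hence ${\rm reg}(\overline{I^{sm}}) - 1 \geq {\rm reg}(\overline{I^s})$. So the maximum on the right-hand side is achieved by the second term, and we conclude
$${\rm reg}(\overline{I^s}/\overline{I^{sm}}) \leq {\rm reg}(\overline{I^{sm}}) - 1,$$
as desired.

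There is no real obstacle here; the only thing one must be careful about is the edge case $m=1$ (to ensure ${\rm reg}(-\infty)$ issues do not arise) and the observation that $\gamma(I) \geq 1$ (which is immediate from the definition, since $\gamma(I)$ is a minimum of positive integers). This mirrors exactly the structure of Corollary \ref{symq}'s proof, so it should be quite short.
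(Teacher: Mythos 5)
Your proposal is correct and follows the paper's own proof essentially verbatim: the same case split on $m=1$, the same short exact sequence with \cite[Corollary 18.7]{p'}, and the same application of Theorem \ref{rint} to show the maximum is attained by ${\rm reg}(\overline{I^{sm}})-1$. Nothing to add.
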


\begin{proof}
It follows from the assumptions that $\gamma(I)s(m-1)\geq 0$. If $\gamma(I)s(m-1)=0$, then $m=1$. In this case $\overline{I^s}/\overline{I^{sm}}$ is the zero module and there is nothing to prove. So, assume that $\gamma(I)s(m-1)\geq 1$. Consider the following short exact sequence.
$$0\longrightarrow \overline{I^{sm}}\longrightarrow \overline{I^s}\longrightarrow \overline{I^s}/\overline{I^{sm}}\rightarrow 0$$
It follows from \cite[Corollary 18.7]{p'} that$${\rm reg}(\overline{I^s}/\overline{I^{sm}})\leq\max\big\{{\rm reg}(\overline{I^s}), {\rm reg}(\overline{I^{sm}})-1\big\}.$$Since $\gamma(I)s(m-1)\geq 1$, the assertion follows by applying Theorem \ref{rint}.
\end{proof}

%%%%%%%%%%%%%%%%%%%%%%%%%%%%%%%%%%%%%%%%%%%%%%%%%%%%%%%%%%%%%%%%%%%%%%%%%%

\section*{Acknowledgment}

The author is supported by the FAPA grant from the Universidad de los Andes.

%%%%%%%%%%%%%%%%%%%%%%%%%%%%%%%%%%%%%%%%%%%%%%%%%%%%%%%%%%%%%%%%%%%%%%%%%%

%%%%%%%%%%%%%%%%%%%%%%%%%%%%%%%%%%%%%%%%%%%%%%%%%%%%%%%%%%%%%%%%%%%%%%%%%%


\begin{thebibliography}{10}

\bibitem {b''} A. Banerjee, The regularity of powers of edge ideals, {\it J. Algebraic Combin.} {\bf 41} (2015), 303--321.

\bibitem {bbh} A. Banerjee, S. Beyarslan, H. T. H${\rm \grave{a}}$, Regularity of powers of edge ideals: from local properties to global bounds, {\it Algebraic Combinatorics} {\bf 3} (2020), 839--854.

\bibitem {bh1} A. Banerjee, H. T. H${\rm \grave{a}}$, Integral closures of powers of sums of ideals, {\it J. Algebraic Combin.} {\bf 58} (2023), 307--323.

\bibitem{c'} A. Conca, Regularity jumps for powers of ideals, Commutative algebra,2132, Lect. Notes pure Appl., Queens Papers in Pure and Appl. {\it Math.,} {\bf 244}, 1--40. Chapman  Hall/CRC, Boca Raton, FL, 2006.
    Math. {\bf 20}, Kingston, Ontario: Queens University, 1969.

\bibitem {cht} D. Cutkosky, J. Herzog,  N. V. Trung, Asymptotic behaviour of Castelnuovo-Mumford regularity,  {\it Compositito Math.} {\bf 118} (1999), 243--261.

\bibitem {hntt} H. T. H${\rm \grave{a}}$, H. D. Nguyen, N. V. Trung, T. N. Trung, Symbolic powers of sums of ideals, {\it Math. Z.} {\bf 294} (2020), 1499--1520.

\bibitem {htt1} H. T. H${\rm \grave{a}}$, N. V. Trung, T. N. Trung, Depth and regularity of powers of sums of ideals, {\it Math. Z.}, {\bf 282} (2016), 819--838.

\bibitem {hh'} J. Herzog, T. Hibi, {\it Monomial Ideals}, Springer-Verlag,
    2011.

\bibitem {js} A. V. Jayanthan, S. Selvaraja, Upper bounds for the regularity of powers of edge ideals of graphs, {\it J. Algebra} {\bf 574} (2021),
    184--205.

\bibitem {mv} N. C. Minh, T. Vu, A characterization of graphs whose small powers of their edge ideals have a linear free resolution, {\it Combinatorica}, to appear.

\bibitem {mnptv} N. C. Minh, L. D. Nam, T. D. Phong, P. T. Thuy, T. Vu, Comparison betwee regularity of small powers of symbolic powers and ordinary powers of an edge ideal, {\it J. Combin. Theory, Ser. A} {\bf 190} (2022), 105621.

\bibitem {p'} I. Peeva, {\it Graded syzygies}, Algebra and Applications, vol. 14, Springer-Verlag London Ltd., London, 2011.

\bibitem {s13} On the depth and Stanley depth of the integral closure of powers of monomial ideals, {\it Collect. Math.}, {\bf 70} (2019), no. 3, 447--459.

\bibitem {s12} S. A. Seyed Fakhari, Stability of depth and Stanley depth of symbolic powers of squarefree monomial ideals, {\it Proc. Amer. Math. Soc.}, {\bf 148} (2020), 1849--1862.

\bibitem {s14} S. A. Seyed Fakhari, Regularity of symbolic powers of edge ideals of chordal graphs, {\it Kyoto J. Math.} {\bf 62} (2022), 753--762.

\bibitem {sy1} I. Swanson, S. Yassemi, Differences in regularities of a power and its integral closure and symbolic power, {\it Comm. Algebra}, {\bf 51} (2023), 4862--4865.

\end{thebibliography}
\end{document}